\newtheorem{Unterkapitel}{}
\newtheorem{theorem}{Theorem}
\newtheorem{claim}{Claim}
\theoremstyle{definition}
\newtheorem{definition}{Definition}
\def\P{\mathbb P}
\def\T{\mathbb T}
\def\<{\langle}
\def\>{\rangle}
\title{The problem of Horn and Tarski}
\author{Egbert Th{\" u}mmel}
\begin{document}

\maketitle

\begin{abstract}
In 1948 A. Horn and A. Tarski asked whether the notions of a $\sigma$-finite cc and a $\sigma$-bounded cc ordering are equivalent. We give a negative answer to this question.
\end{abstract}

When analyzing Boolean algebras carrying a measure, Horn and Tarski \cite{HT:48} defined the following two notions:
\begin{definition}\label{fragmentation}
An ordering $\P$ is called 
\begin{itemize}
\item [(i)] \emph{$\sigma$-bounded cc}\index{fragmentation!
$\sigma$-bounded cc} if  $\P=
{\displaystyle \bigcup_{n{\in}\omega}} P_n$, where each $P_n$ has the
$n+2$-cc.
\item [(ii)] \emph{$\sigma$-finite cc}\index{fragmentation!
$\sigma$-finite cc} if  $\P=
{\displaystyle \bigcup_{n{\in}\omega}} P_n$, where each $P_n$ has the
$\omega$-cc.
\end{itemize}
Here an ordering or its subset has the $\kappa$-cc ($\kappa$-chain condition) for a cardinal $\kappa$ if it contains no antichain (set of pairwise orthogonal elements) of size $\kappa$.

\end{definition}
Clearly, any $\sigma$-bounded cc ordering is $\sigma$-finite cc (and both are $\omega_1$-cc - also called ccc).
Horn and Tarski asked whether these two classes coincide:

\smallskip\smallskip\smallskip

\noindent{\bf Problem: A. Horn and A. Tarski 1948} \cite{HT:48}
{\it Is every $\sigma$-finite cc ordering also $\sigma$-bounded cc?}

\smallskip\smallskip\smallskip

There is a standard way how to map an ordering densely into a complete Boolean algebra. This mapping preserves our two properties. The problem of Horn and Tarski  can therefore be formulated in terms of Boolean algebras as well. 
It is easy to see that a Boolean algebra carrying a strictly positive measure is $\sigma$-bounded cc (take as $P_n$ the set of elements of measure at least $1/n$). 
If the Boolean algebra carries only a strictly positive exhaustive submeasure this property could get lost, but still the Boolean algebra will be $\sigma$-finite cc (take the same $P_n$). 
The question, whether any Boolean algebra carrying a strictly positive exhaustive submeasure carries also a strictly positive measure, is one formulation of the famous Control measure problem. 
It was therefore expected that an anticipated negative solution of this problem will give also a counterexample to the problem of Horn and Tarski . 
But when such an example solving the Control measure problem was constructed by M. Talagrand (\cite{Tal:2008}) it turned out that it is even $\sigma$-bounded cc, so the problem of Horn and Tarski  remained open. 
We will construct here a counterexample to the problem of Horn and Tarski .

\begin{theorem}
There exists an ordering which is $\sigma$-finite cc but not $\sigma$-bounded cc.
\end{theorem}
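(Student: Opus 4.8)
The plan is to produce the required ordering $\P$ as a poset of finite combinatorial approximations carrying a \emph{rank} function $\rho\colon\P\to\omega$, and then to verify the two halves — that $\P$ is $\sigma$-finite cc, and that $\P$ is \emph{not} $\sigma$-bounded cc — essentially independently, with almost all of the difficulty concentrated in the second. The conditions of $\P$ will be finite objects (finite labelled trees, or finite partial functions together with a finite system of side conditions), ordered by a suitable notion of coherent extension, and the construction will be arranged to have two opposing features: \emph{(thin levels)} for each fixed $n$ there are, up to the relevant isomorphism, only finitely many ``shapes'' of conditions of rank $\le n$; and \emph{(self-similarity)} below any condition $p$ and above any prescribed rank the poset still contains, for every $k$, an antichain of size $k$. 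The decomposition witnessing $\sigma$-finiteness will then come directly from the rank: we put $P_n=\{p\in\P:\rho(p)=n\}$, so that $\P=\bigcup_{n\in\omega}P_n$.

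For $\sigma$-finite cc it suffices to show each $P_n$ has the $\omega$-cc. Given an infinite $A\subseteq P_n$, a $\Delta$-system argument together with a Ramsey-type thinning, applied to the finitely many possible shapes of conditions of rank $\le n$, lets one pass to an infinite $A'\subseteq A$ whose members share a common ``root'' and pairwise agree off it; by the coherence built into the ordering this forces any two members of $A'$ to be compatible, so $A$ was not an antichain. Hence $P_n$ has no infinite antichain, and $\P$ is $\sigma$-finite cc. This is the routine half.

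The hard half is that $\P$ is not $\sigma$-bounded cc. Suppose toward a contradiction that $\P=\bigcup_{n\in\omega}Q_n$ with $Q_n$ having the $(n+2)$-cc, and let $c\colon\P\to\omega$ send $p$ to the least $n$ with $p\in Q_n$; it is enough to locate some $n$ and an antichain of size $n+2$ inside $c^{-1}(\{n\})$. Here one runs a fusion/amalgamation argument: using self-similarity one builds a finite \emph{splitting configuration} — a finite tree of conditions, each splitting into many pairwise incompatible immediate successors — with the ranks occurring in it pushed as high as one pleases, and one tracks the colour $c$ along it. Iterated pigeonhole inside such configurations, exploiting that a configuration can be rebuilt below arbitrarily deep conditions and at arbitrarily large rank, yields a single colour $n$ carrying an antichain of size $n+2$, contradicting the $(n+2)$-cc of $Q_n$.

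The main obstacle is the simultaneous fulfilment of the two opposing demands on $\P$: the levels $P_n$ must be combinatorially thin enough that bounded-rank infinite families are never antichains, yet the poset as a whole must be thick enough — homogeneous below every condition and persistently supporting arbitrarily large finite antichains at arbitrarily large rank — that no regrouping of its elements into countably many pieces can impose a uniform finite bound on antichains. In particular the rank must be kept from behaving like the level function of a strictly positive measure or even of a well-behaved submeasure, since such behaviour would, exactly as with Talagrand's example, make $\P$ $\sigma$-bounded cc; the construction must therefore force the sizes of the unavoidable antichains to grow across ranks in a genuinely pathological, non-subadditive fashion. Checking that the fusion in the hard half really goes through — that the colour-bookkeeping can always be driven down to a monochromatic antichain of the critical size $n+2$ — is the technical heart of the argument.
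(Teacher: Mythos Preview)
Your proposal is a research outline, not a proof: the poset $\P$ is never actually defined. You correctly isolate the central tension --- each level $P_n$ must be combinatorially thin enough to forbid infinite antichains, while the poset as a whole must be homogeneous enough to defeat any countable regrouping --- but you then explicitly defer the ``technical heart,'' namely exhibiting a concrete $\P$ in which both demands are met and the fusion/pigeonhole step really terminates. Without a definition of $\P$, neither the $\Delta$-system/Ramsey argument for the easy half nor the fusion argument for the hard half can even be \emph{stated} precisely, let alone checked; and it is exactly in reconciling the two demands that every naive candidate fails (as you yourself note in the remark about Talagrand's example). So as it stands there is nothing to verify.

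For comparison, the paper does not proceed via an abstract rank on finite combinatorial objects. It takes the tree $T=\bigcup_{\alpha<\omega_1}{}^{\alpha+1}\omega$, linearly orders it, gives it the interval topology, and lets $\P=\T(T)$ be the Todor\v{c}evi\'c ordering of finite unions of convergent sequences (with limits), ordered by reverse inclusion preserving isolated points. The $\sigma$-finite decomposition is by three concrete numerical invariants of a condition (a gap width $k$, $|F^d|$, and the size of a finite residual set), and the finite-cc of each piece is obtained by a Ramsey argument on a four-type colouring of pairs --- in spirit close to your $\Delta$-system idea, but with specific topological content. The failure of $\sigma$-bounded cc is \emph{not} obtained by fusion: instead, given a putative decomposition $\P=\bigcup_n P_n$, one stabilises along the (uncountable-height) tree the integer-valued functions $s\mapsto$ ``maximal antichain size in $P_n$ below $s$'', finds a node $s$ where all of them are constant on the cone above, and then manufactures a single convergent sequence $F$ through the immediate successors of $s$ that is orthogonal to a maximal antichain in whichever $P_n$ contains $F$, yielding one antichain element too many. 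The uncountable height of $T$ is what makes this stabilisation go through, and it replaces the fusion/bookkeeping you envisage.
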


The technique used in the construction appeared first in \cite{Todo:91} and is further developed in \cite{BPT:2011} : 

For a subset $F$ of a topological space $X$ let $F^d$ denote the set of all accumulation points.
\begin{definition}
For a topological space $X$ the Todorcevic ordering $\T(X)$ is the set of all subsets $F$ of the space which are a finite union of converging sequences including their limit points.
The order relation is defined by such extensions which preserve isolated points, i.e. 
$F_1\leq F_2$ if $F_1\supseteq F_2$  and $F_1^d\cap F_2=F_2^d$.
\end{definition}

We start with the set $T=\bigcup_{\alpha<\omega_1}{ }^{\alpha+1}\omega$. This set is made into a tree by the order of inclusion. 
We will extend the order of the tree into a linear one. 
Define the order $\leq$ on $T$ by $s<t$ if either $s\subset t$ or $s(\beta)>t(\beta)$ for $\beta=\min \{\gamma\ :\ s(\gamma)\neq t(\gamma)\}$.
Note that, for any point of the tree $T$, the set of immediate successors in the tree is of type $\omega^*$ in the linear order $<$, i.e. $\omega$ ordered in the reverse. 
Take the interval topology $\tau_{\leq}$ on $T$. 
We apply the operator $\T$ on this linearly ordered topological space $(T,\tau_\leq)$ to obtain the Todorcevic ordering $\P=\T(T)$. This will be the example which proves the theorem:

\begin{claim}
$\P$ is not $\sigma$-bounded cc.
\end{claim}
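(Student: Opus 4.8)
The plan is to construct inside $\P=\T(T)$ a family $\{F_\xi:\xi<\omega_1\}$ of pairwise distinct conditions with the property that every uncountable subfamily $\{F_\xi:\xi\in W\}$ contains, for every $m\in\omega$, an antichain of size $m$. This already gives the claim: if $\P=\bigcup_{n\in\omega}P_n$ with each $P_n$ having the $(n+2)$-cc, then, since the family is uncountable, some $R_n=\{\xi<\omega_1:F_\xi\in P_n\}$ is uncountable, and then $\{F_\xi:\xi\in R_n\}\subseteq P_n$ would contain an antichain of size $n+2$, contradicting the $(n+2)$-cc of $P_n$. Hence no such decomposition exists.

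Before building the family I would record the compatibility relation in $\T(X)$. For $F,G\in\T(X)$ the set $F\cup G$ is again a finite union of converging sequences together with their limits, and one checks that it is a common lower bound of $F$ and $G$ exactly when no isolated point of one is an accumulation point of the other; thus $F\perp G$ iff $F^d\cap(G\setminus G^d)\neq\emptyset$ or $G^d\cap(F\setminus F^d)\neq\emptyset$. The decisive structural point is that $F^d$ is always finite, so incompatibility is forced by the finitely many limit points of one condition meeting the isolated points of the other. In the space $(T,\tau_{\leq})$ the relevant geometric fact, besides the one already noted about immediate successors, is that any sequence $u_i$ with $u_i\supseteq t^\frown\langle n_i\rangle$ and $n_i\to\infty$ converges to $t$; this lets us choose converging sequences accumulating at a prescribed node while spreading their points over arbitrarily high levels of the tree.

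The construction of the family is the core of the argument and uses the inductive technique of \cite{Todo:91} and \cite{BPT:2011}. To each $\xi<\omega_1$ I would attach a node $t_\xi$ of $T$ at level $\xi$, chosen by recursion on $\xi$ together with a fixed ladder system cofinal in $\xi$ at limit stages, and let $F_\xi$ be a finite union of converging sequences accumulating at $t_\xi$ and at finitely many further nodes read off from the walk of $\xi$ down its ladders; the recursion must be arranged so that the way the bodies of $F_\xi$ and $F_\eta$ interleave is controlled by the position of $t_\xi$ relative to $t_\eta$ in the tree and by the oscillation of the associated walks. The aim of the design is that, given an uncountable $W$, after a $\Delta$-system refinement and a pressing-down argument isolating uncountably many mutually aligned $\xi\in W$, one can select $\xi_1,\dots,\xi_m\in W$ whose nodes sit in $T$ so that each $F_{\xi_i}$ contains, among its isolated points, an accumulation point of every $F_{\xi_j}$ with $j\neq i$; by the criterion above $\{F_{\xi_1},\dots,F_{\xi_m}\}$ is then an antichain.

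The hard part is precisely this last step. Since each condition has only finitely many accumulation points and only finitely many further isolated points can be attached without leaving $\T(T)$, no ``localized'' construction works: for a family built from plain fans, or from converging sequences running along a fixed subtree, orthogonality forces comparability in the tree, and one checks that such families admit antichains of bounded size only. Overcoming this requires letting the conditions reach deep into $T$ and exploiting the linear order on $T$ (in particular that every node is the $\omega^*$-limit of its immediate successors) to keep uncountable subfamilies entangled; getting the recursion to close up, and verifying that the pressing-down argument really yields antichains of every finite size, is the delicate point.
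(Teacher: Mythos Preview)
Your proposal is not a proof but an outline whose central step is left blank. You state the target --- an uncountable family $\{F_\xi\}$ in which every uncountable subfamily carries antichains of every finite size --- and then describe the intended construction only in evocative terms (ladders, walks, oscillation, $\Delta$-systems, pressing down), concluding with the admission that ``getting the recursion to close up, and verifying that the pressing-down argument really yields antichains of every finite size, is the delicate point.'' That delicate point is the whole claim; nothing you wrote tells the reader how to produce even one such family in $\T(T)$, or why the verification goes through. As written this is a genuine gap, not a minor omission.

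It is also worth contrasting your plan with what the paper actually does, because the paper's argument is both different in kind and far more elementary. The paper does \emph{not} build a family in advance; it diagonalises directly against a hypothetical decomposition $\P=\bigcup_n P_n$. For each $n$ one defines $f_n(s)$ to be the largest size of an antichain inside $\{F\in P_n:\exists\,t\in F^d,\ t\supseteq s\}$. Each $f_n$ is $\subseteq$-decreasing and bounded by $n+1$, so it eventually stabilises along any branch; a single node $s$ can be found above which all the $f_n$ are constant, say with values $f(n)$. For each $n$ pick a witnessing antichain $\{F_{n,i}\}_{i<f(n)}$ above $s^\frown n$ and a point $t_{n,i}\in(F_{n,i})^d$ with $t_{n,i}\supseteq s^\frown n$. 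Since the $t_{n,i}$ lie above $s^\frown n$ with $n\to\infty$, they converge to $s$; so $F=\{t_{n,i}\}\cup\{s^\frown n\}\cup\{s\}$ is a single element of $\P$ in which every $t_{n,i}$ is isolated. Hence $F\perp F_{n,i}$ for all $n,i$, and whatever $P_n$ contains $F$ now has an antichain of size $f(n)+1$ above $s$, contradicting the stabilised value $f(n)$.

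This argument uses no ladders, walks, $\Delta$-systems or pressing down, and it exploits exactly the feature of $(T,\tau_\leq)$ you identified --- that points above $s^\frown n$ with $n\to\infty$ converge to $s$ --- to manufacture a single diagonal condition. If you want to salvage your ``proactive'' strategy you would need to exhibit the family and carry out the refinement explicitly; but the paper's ``reactive'' diagonalisation is the intended and much shorter route.
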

\begin{proof}
Assume by contradiction that  $\P=\bigcup_{n\in\omega} P_n$, with each $P_n$ being $n+2$-cc, witnesses that $\P$ is $\sigma$-bounded cc.
For $n<\omega$ define functions $f_n:T\longrightarrow n+2$ such that $f_n(s)$ is the maximal length of an antichain which is a subset of the set $P_n(s)=\{F\in P_n : \exists t\in F^d(t\supseteq s)\}$. 
The function $f_n$ is decreasing with respect to $\subseteq$. It follows that for any $s\in T$ there is an $s^\prime\supseteq s$ such that $f_n(s^\prime)=f_n(t)$ for all $t\supseteq s^\prime$.
We find an increasing (with respect to $\subseteq$) sequence $\{s_n\}$ such that $f_n(s_n)=f_n(t)$ for all $t\supseteq s_n$.
For an arbitrary $s\in T$ with $s\supset\bigcup_{n<\omega}s_n$ we have therefore $f_n(s)=f_n(t)$ for all $t\supseteq s$ and $n<\omega$.
Fix such an $s$ and let $f(n)=f_n(s)$.
For $n<\omega$ choose in $P_n(s^\smallfrown n)$ an antichain $\{F_{n,i}\}_{i<f(n)}$ and $t_{n,i}\supseteq s^\smallfrown n$ such that $t_{n,i}\in(F_{n,i})^d$ for $i<f(n)$.
Then $\{t_{n,i}\}_{n<\omega,i<f(n)}$ converges to $s$ (if not finite) and so does $\{s^\smallfrown n\}_{n<\omega}$, i.e. $F=\{t_{n,i}\}_{n<\omega,i<f(n)}\cup\{s^\smallfrown n\}_{n<\omega}\cup\{s\}\in\P$. 
Notice that $F$ is orthogonal to all $F_{n,i}$ for $n<\omega$ and $i<f(n)$ since $t_{n,i}$ is isolated in $F$ and an accumulation point in $F_{n,i}$. 
But $F$ has to be contained in some $P_n$, hence $\{F_{n,i}\}_{i<f(n)}\cup\{F\}$ is an antichain in $P_n(s)$ and therefore $f_n(s)\geq f(n)+1$, a contradiction.
\end{proof}

\begin{claim}
$\P$ is $\sigma$-finite cc.
\end{claim}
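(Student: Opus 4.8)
The plan is to write $\P=\bigcup_{n<\omega}P_n$ where $P_n$ collects the conditions of one fixed order type, and to prove that no $P_n$ has an infinite antichain. I first record the compatibility relation. Since the derived set of a union is the union of the derived sets, $(F\cup G)^d=F^d\cup G^d$ for $F,G\in\P$, so $F\cup G\in\P$; and any common lower bound $H$ of $F,G$ satisfies $H\supseteq F\cup G$, hence $F^d\cup G^d=(F\cup G)^d\subseteq H^d$, and with $H^d\cap F=F^d$ this forces $G^d\cap F\subseteq F^d$, and symmetrically $F^d\cap G\subseteq G^d$; conversely these two inclusions make $F\cup G$ itself a common lower bound. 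Thus $F\perp G$ exactly when some accumulation point of one of $F,G$ is an isolated point of the other. Next, the partition: a condition $F$ is a finite union of convergent sequences, hence compact with $F^d$ finite, and removing a small open neighbourhood of $F^d$ leaves a set that is compact, discrete, hence finite; so $F$ consists of its $k:=|F^d|$ accumulation points together with, between consecutive ones (and at the ends), a possible $\omega^*$-sequence of isolated points decreasing to the lower accumulation point, a possible $\omega$-sequence increasing to the higher one, and finitely many further isolated points in between. The order type of $F$ in $(T,\tau_{\leq})$, together with the marking of its accumulation points, is therefore coded by finitely many natural numbers and so takes only countably many values; let $P_n$ be the conditions realising the $n$-th value.

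Now suppose $\{F_j:j<\omega\}\subseteq P_n$ is an infinite antichain; all $F_j$ have the same $k$ and the same canonical order-type data. By the compatibility criterion and finitely many uses of Ramsey's theorem, pass to an infin
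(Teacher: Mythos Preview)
Your proposal is cut off mid-sentence, so the substantive Ramsey analysis is missing; what remains is only the setup. The setup itself is sound: the compatibility criterion you state is exactly the one the paper uses, and the overall plan (countable partition, then Ramsey on the witnesses of incompatibility) is the paper's plan as well.

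However, your partition is not the paper's, and this is where a real difficulty hides. You partition by the marked order type of $F$ inside $(T,\leq)$. The paper instead fixes three numbers: $|F^d|$, the size $|R(F)|$ of a certain finite residue, and a parameter $k(F)$ chosen so that the intervals $(s,s^\smallfrown k(F))$ for $s\in F^d$ are pairwise disjoint and miss $F^d$. This $k(F)$ is \emph{not} an order-type invariant; it encodes how the convergent tails sit relative to the tree structure of $T$. In the paper's Ramsey case $(1,n,n')$ one needs that $s_i^{n}\in F_j^{n'}\subseteq (s_j^{n'},s_j^{n'}{}^\smallfrown\bar k)$ forces $s_i^{n}\supset s_j^{n'}$ in the tree, and the eventual contradiction is that one produces an infinite strictly $\supset$-decreasing chain in $T$. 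Knowing only that $s_i^n$ lies in the $\omega^*$-tail of $F_j$ above $s_j^{n'}$ (which is all the order type tells you) does not yield $s_i^n\supset s_j^{n'}$, because finitely many members of that tail need not be tree-extensions of $s_j^{n'}$, and those finitely many exceptions are exactly what $k(F)$ controls. So with your partition the hardest colour class has no visible contradiction; you would need to add a parameter like $k(F)$, or otherwise bring in the tree structure of $T$, to finish.

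A related point: your structural description allows ``a possible $\omega$-sequence increasing to the higher accumulation point''. In this particular space that never happens---no increasing sequence in $(T,\leq)$ has a supremum, so every convergent sequence reaches its limit from above (up to finitely many terms). The paper uses this fact to define $R(F)$ and to justify that it is finite. Your write-up does not use any special feature of $(T,\leq)$ at all; as written, the argument would purport to show that $\T(X)$ is $\sigma$-finite cc for every linearly ordered space $X$, which should make you suspicious that something specific to $T$ is still missing.
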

\begin{proof}
We argue in the order $\leq$.
The set $\{s^\smallfrown k\}_{k<\omega}$ is a decreasing sequence with infimum $s$.
We can therefore for any $F\in\P$ fix a $k(F)<\omega$ such that, for $s\in F^d$, the open intervals $(s,s^\smallfrown k(F))$ are disjoint from $F^d$. No increasing sequence of $(T,\leq)$ has a supremum. 
This means that any sequence which converges to $s$ is above $s$ with the possible exception of finitely many elements.
Therefore $R(F)=F\backslash\big(\bigcup_{s\in F^d}(s,s^\smallfrown k(F))\cup F^d\big)$ is finite.  
Let $$P_{k,n,m}=\{F\in\P\quad :\quad k(F)=k\ \&\ |F^d|=n\ \&\ |R(F)|=m\}.$$
Surely $\P=\bigcup_{k,n,m<\omega}P_{k,n,m}$.
We have to show that all $P_{k,n,m}$'s are finite cc.
Assume by contradiction that $\{F_i\}_{i<\omega}\subset P_{\bar k,\bar n,\bar m}$ is an infinite antichain for some fixed $\bar k,\bar n,\bar m$.
Let $(F_i)^d=\{s_i^n\}_{n<\bar n}$ and $R(F_i)=\{r^m_i\}_{m<\bar m}$ be enumerated and put $F^n_i=F_i\cap (s^n_i,s^n_i{ }^\smallfrown\bar k)$.
Then $F^n_i$ is a sequence with limit $s^n_i$ and $F_i\backslash (F_i)^d=\bigcup_{n<\bar n}F^n_i\cup\{r^m_i\}_{m<\bar m}$ is the set of isolated points of $F_i$.
We say that $\{i,j\}\in[\omega]^2$, $i<j$, has color
 
\begin{eqnarray*}
 &(1,n,n^\prime)&\text{if } s^{n}_i\in F^{n^\prime}_j\\
  &(2,n,m)&\text{if } s^n_i=r^m_j\\
 &(3,n,n^\prime)&\text{if } s^{n}_j\in F^{n^\prime}_i\\
 &(4,n,m)&\text{if } s^n_j=r^m_i
\end{eqnarray*}
for $n,n^\prime<\bar n$ and $m<\bar m$.
Since $\{F_i\}_{i<\omega}$ was assumed to be an antichain, there must be for any $\{i,j\}\in[\omega]^2$ a point which is isolated in $F_i$ and not isolated in $F_j$ or vice versa, i.e. any pair $\{i,j\}$ obtains at least one color. 
Ramsey's theorem asserts that there must be an infinite subset of $\omega$ which is homogeneous in one color.
For notational convenience, we assume that $\omega$ itself is this homogeneous set. We are going to derive a contradiction for each of the colors.

\begin{Unterkapitel}
$\omega$ is homogeneous in color $(1,n,n^\prime)$.
\end{Unterkapitel}

Note that $s\in (t,t^\smallfrown\bar k)$ implies $s\supset t$ and $(s,s^\smallfrown\bar k)\subset (t,t^\smallfrown\bar k)$.

Homogeneity in color $(1,n,n^\prime)$ implies $s^{n}_i\in F^{n^\prime}_j\subseteq (s^{n^\prime}_j,s^{n^\prime}_j{ }^\smallfrown\bar k)$, i.e. $s^{n}_i\supset s^{n^\prime}_j$ for all $i<j$.
We have $s^{n}_{i-1}\supset s^{n^\prime}_i, s^{n^\prime}_{i+1}$, hence $s^{n^\prime}_i\subseteq s^{n^\prime}_{i+1}$ or $s^{n^\prime}_i\supset s^{n^\prime}_{i+1}$.
Consider the first case. The order $\leq$ is stronger than $\subseteq$, therefore $s^{n^\prime}_i\leq s^{n^\prime}_{i+1}<s^n_{i-1}\in F^{n^\prime}_i \subseteq (s^{n^\prime}_i,s^{n^\prime}_i{}^\smallfrown\bar k)$. 
The latter is an interval, hence $s^{n^\prime}_{i+1}=s^{n^\prime}_i$ or $s^{n^\prime}_{i+1}\in (s^{n^\prime}_i,s^{n^\prime}_i{}^\smallfrown\bar k)$, 
therefore $(s^{n^\prime}_{i+1},s^{n^\prime}_{i+1}{ }^\smallfrown\bar k)\subseteq (s^{n^\prime}_i,s^{n^\prime}_i{ }^\smallfrown\bar k)$.
But $s^n_i\notin (s^{n^\prime}_i,s^{n^\prime}_i{ }^\smallfrown\bar k)$. 
This follows from the definition of $\bar k=k(F_i)$ at the beginning of the proof.
On the other hand, $s^n_i\in F^{n^\prime}_{i+1}\subseteq (s^{n^\prime}_{i+1},s^{n^\prime}_{i+1}{ }^\smallfrown\bar k)$ by homogeneity - a contradiction.
So the second case $s^{n^\prime}_i\supset s^{n^\prime}_{i+1}$ must hold for all $i<\omega$, i.e. the $s^{n^\prime}_i$'s are a strictly decreasing sequence in the tree $T$, again a contradiction.

\begin{Unterkapitel}
$\omega$ is homogeneous in color $(2,n,m)$.
\end{Unterkapitel}
From $s^n_1=r^m_2$ and $s^n_0=r^m_2$ and $s^n_0=r^m_1$ (homogeneity in color $(2,n,m)$)
we conclude $s^n_1=r^m_1$ - a contradiction since $s^n_1$ is an accumulation point in $F_1$ and $r^m_1$ is isolated in $F_1$.
 
\begin{Unterkapitel}
$\omega$ is homogeneous in color $(3,n,n^\prime)$.
\end{Unterkapitel}
Assume that there are $i<j$ such that $s^{n}_i=s^{n}_j$.
Then $s^{n}_i=s^{n}_j\in F^{n^\prime}_i$, but $s^{n}_i$ is an accumulation point of $F_i$ whereas $F^{n^\prime}_i$ contains only isolated points of $F_i$ - a contradiction.
So the $s^{n}_j$'s are pairwise different for $j<\omega$. 
Homogeneity in color $(3,n,n^\prime)$ implies that all $s^{n}_j$, $j>0$, are in $F^{n^\prime}_0$, the set $\{s^{n}_j\}_{j=1}^\omega$ therefore converges to $s^{n^\prime}_0$.
By the same argument, we obtain that $\{s^{n}_j\}_{j=2}^\omega$ converges to $s^{n^\prime}_1$, hence $s^{n^\prime}_0=s^{n^\prime}_1$. 
Again by homogeneity $s^{n}_1\in F^{n^\prime}_0\subseteq (s^{n^\prime}_0,s^{n^\prime}_0{ }^\smallfrown\bar k)=(s^{n^\prime}_1,s^{n^\prime}_1{ }^\smallfrown\bar k)$ - a contradiction since 
$s^{n}_1\notin (s^{n^\prime}_1,s^{n^\prime}_1{ }^\smallfrown\bar k)$ by definition of $\bar k=k(F_1)$.

\begin{Unterkapitel}
$\omega$ is homogeneous in color $(4,n,m)$.
\end{Unterkapitel}
The same as color $(2,n,m)$.

\smallskip

\noindent For all the colors we obtained a contradiction, so an infinite antichain cannot exist.

\end{proof}

\def\cprime{$'$}

\end{document}